
\documentclass[reqno,a4paper,11pt]{article}

\usepackage{amsmath,amssymb,amsthm}

\usepackage{mathrsfs}

\usepackage{mathtools}
\usepackage{commath}

\usepackage{thmtools}
\usepackage{thm-restate}

\usepackage{cases}
\usepackage{enumitem}
\setlist[enumerate,1]{label=(\roman*)}

\usepackage[pdftex, pdfborderstyle={/S/U/W 0}]{hyperref}
\hypersetup{
    colorlinks=true,
    linkcolor=magenta,
    citecolor=cyan,
}

\usepackage{cleveref}

\usepackage{etoolbox}

\usepackage{comment}

\usepackage[numbers]{natbib}



\numberwithin{equation}{section}

\linespread{1.07}



\ifdefined\thmcolor
\declaretheoremstyle[
  shaded={bgcolor=\thmcolor}
]{plain}
\else
\fi

\ifdefined\defcolor
\declaretheoremstyle[
  headfont=\normalfont\bfseries,
  bodyfont=\normalfont,
  shaded={bgcolor=\defcolor}
]{noital}
\else
\declaretheoremstyle[
  headfont=\normalfont\bfseries,
  bodyfont=\normalfont,
]{noital}
\fi


\declaretheorem[style=plain,numberwithin=section,name=Theorem]{theorem}

\declaretheorem[style=plain,sibling=theorem,name=Lemma]{lemma}

\declaretheorem[style=plain,sibling=theorem,name=Conjecture]{conjecture}

\declaretheorem[style=plain,sibling=theorem,name=Question]{question}
\declaretheorem[style=plain,sibling=theorem,name=Observation]{observation}

\declaretheorem[style=plain,numbered=no,name=Theorem]{theorem-n}
\declaretheorem[style=plain,numbered=no,name=Proposition]{proposition-n}
\declaretheorem[style=plain,numbered=no,name=Lemma]{lemma-n}
\declaretheorem[style=plain,numbered=no,name=Corollary]{corollary-n}
\declaretheorem[style=plain,numbered=no,name=Conjecture]{conjecture-n}
\declaretheorem[style=plain,numbered=no,name=Claim]{claim-n}
\declaretheorem[style=plain,numbered=no,name=Fact]{fact-n}
\declaretheorem[style=plain,numbered=no,name=Open Problem]{openproblem-n}
\declaretheorem[style=plain,numbered=no,name=Question]{question-n}
\declaretheorem[style=plain,numbered=no,name=Observation]{observation-n}



\declaretheorem[style=noital,sibling=theorem,name=Definition]{definition}

\declaretheorem[style=noital,numbered=no,name=Remark]{remark-n}
\declaretheorem[style=noital,numbered=no,name=Definition]{definition-n}
\declaretheorem[style=noital,numbered=no,name=Construction]{construction-n}
\declaretheorem[style=noital,numbered=no,name=Example]{example-n}



\newcommand{\defined}{\mathrel{\coloneqq}}

\DeclarePairedDelimiter{\p}{\lparen}{\rparen}

\newcommand{\st}{\mathbin{\colon}}

\undef{\set}
\DeclarePairedDelimiter{\set}{\lbrace}{\rbrace}

\undef{\emptyset}
\newcommand{\emptyset}{\varnothing}

\DeclarePairedDelimiter{\card}{\lvert}{\rvert}

\newcommand{\union}{\mathbin{\cup}}
\newcommand{\inter}{\mathbin{\cap}}









\undef{\mod}
\newcommand{\mod}[1]{\ (\mathrm{mod}\ #1)}



\undef{\abs}
\DeclarePairedDelimiterX{\abs}[1]
  {\lvert}{\rvert}{\ifblank{#1}{\,\cdot\,}{#1}}

\undef{\norm}
\DeclarePairedDelimiterX{\norm}[1]
  {\lVert}{\rVert}{\ifblank{#1}{\,\cdot\,}{#1}}

\DeclarePairedDelimiterX{\inner}[2]
  {\langle}{\rangle}{\ifblank{#1}{\,\cdot\,}{#1},\ifblank{#2}{\,\cdot\,}{#2}}









\DeclareMathDelimiter{\given}
  {\mathbin}{symbols}{"6A}{largesymbols}{"0C}


\DeclareMathOperator{\Prob}{\mathbb{P}}
\DeclarePairedDelimiterXPP{\prob}[1]
  {\Prob}{\lparen}{\rparen}{}
  {\renewcommand{\given}{\nonscript\;\delimsize\vert\nonscript\;\mathopen{}}#1}

\DeclareMathOperator{\Expec}{\mathbb{E}}
\DeclarePairedDelimiterXPP{\expec}[1]
  {\Expec}{\lparen}{\rparen}{}
  {\renewcommand{\given}{\nonscript\;\delimsize\vert\nonscript\;\mathopen{}}#1}

\DeclareMathOperator{\Var}{Var}
\DeclarePairedDelimiterXPP{\var}[1]
  {\Var}{\lparen}{\rparen}{}
  {\renewcommand{\given}{\nonscript\;\delimsize\vert\nonscript\;\mathopen{}}#1}

\DeclareMathOperator{\Cov}{Cov}
\DeclarePairedDelimiterXPP{\cov}[2]
  {\Cov}{\lparen}{\rparen}{}{#1,#2}







\newcommand{\eps}{\varepsilon}
\newcommand{\sseq}{\subseteq}

\let\l\relax
\newcommand{\l}{\ell}


\newcommand{\EE}{\mathbb{E}}

\newcommand{\NN}{\mathbb{N}}





\usepackage{geometry}
\geometry{
    left=30mm,
    right=30mm,
    bottom=35mm,
}

\usepackage{scrextend}

\usepackage[T1]{fontenc}
\usepackage{titlesec}

\titleformat{\section}{\centering\bfseries\scshape\Large}{\thesection}{1em}{}
\titleformat{\subsection}{\bfseries\scshape\large}{\thesubsection}{1em}{}




\newcommand{\er}{Erd\H{o}s--R\'{e}nyi}
\newcommand{\expander}{$(=\hspace{-0.2em}k,d)$-vertex expander}


\begin{document}

\title{\textsc{\bfseries Finding long cycles in percolated expander graphs}}

\author{\textsc{Lawrence Hollom}\footnote{\href{mailto:lh569@cam.ac.uk}{lh569@cam.ac.uk}, Department of Pure Mathematics and Mathematical Statistics (DPMMS), University of Cambridge, Wilberforce Road, Cambridge, CB3 0WA, United Kingdom}}

\date{}

\maketitle

\begin{abstract}
    Given a graph $G$, the percolated graph $G_p$ has each edge independently retained with probability $p$.
    Collares, Diskin, Erde, and Krivelevich initiated the study of large structures in percolated single-scale vertex expander graphs, wherein every set of exactly $k$ vertices of $G$ has at least $dk$ neighbours before percolation.
    We extend their result to a conjectured stronger form, proving that if $p = (1+\varepsilon)/d$ and $G$ is a graph on at least $k$ vertices which expands as above, then $G_p$ contains a cycle of length $\Omega_\varepsilon(kd)$ with probability at least $1-\exp(-\Omega_\varepsilon(k/d))$ as $k\rightarrow\infty$.
\end{abstract}


\section{Introduction}
\label{sec:intro}



For a graph $G$ and a probability $p\in [0,1]$, the \emph{percolated} graph $G_p$ is formed by retaining each edge of $G$ independently with probability $p$.
The most general form of the question we consider here is: what are the threshold values of $p$ at which various structures emerge in the graph $G_p$?
And what conditions must be placed on $G$ for this to occur?
When the underlying graph $G$ is the complete graph $K_n$, this model of percolation recovers the well-known \er\ random graph $G(n,p)$, and in this situation questions of phase transitions have been subject to a long line of research, beginning with the seminal 1959 paper of Erd\H{o}s and R\'{e}nyi \cite{ER59}.
However, while the structure of the \er\ random graph is generally well-understood, this is not the case for more general percolation.
Indeed, much of the structure of $G$ carries over into $G_p$, and so assumptions on $G$ are required as well; for example, $G_p$ cannot have large connected components if these are not present in $G$.

A natural condition that may be placed on $G$ is that of a minimum degree constraint.
Indeed, Krivelevich and Sudakov \cite{KS13} showed in 2013 by analysis of a depth-first search (DFS) that if $G$ has minimum degree at least $d$, then $G_p$ contains a path of length $\Omega(d)$ with high probability provided that $p = (1+\eps)/d$.
This also provided a new, simple proof of a classic result of Ajtai, Koml\'{o}s, and Szemer\'{e}di \cite{AKS81} concerning long paths in $G(n,p)$.
Note that this is the best value of $p$ that one could hope for, as for $p < 1/d$, one does not in general expect $G_p$ to have even a large component.
They moreover noted many applications of their analysis of a DFS to objects such as digraphs, pseudorandom graphs, and positional games.
A more subtle analysis allowed Krivelevich and Samotij \cite{KS14} to find cycles of length $\Omega(d)$ under the same constraints.

Expansion conditions at a larger scale were considered by Diskin, Erde, Kang, and Krivelevich \cite{DEKK23}, who showed that one may with high probability find components of size $k/2$ in $G_p$ with $p = (1+\eps)/d$ provided that every set of size $k$ has edge-boundary (in $G$) of size at least $dk$.
They moreover noted that this condition does not suffice to find a long path, as, for example, the graph $K_{d,d^{10}}$ has good edge-expansion for sets of size up to $d^9$ but no path of length longer than $2d$.
A way around this issue was suggested by Collares, Diskin, Erde, and Krivelevich \cite{CDEK25}, who considered vertex-expansion rather than edge-expansion.
Indeed, they proved the following result.

\begin{theorem}[\cite{CDEK25}, Theorem 2]
    \label{thm:paths}
    Let $k,d\in \NN$, and let $G$ be a graph on at least $k$ vertices such that every $S\sseq V(G)$ with $\card{S} = k$ satisfies $\card{N(S)} \geq kd$.
    Let $\eps > 0$ be a sufficiently small constant and let $p = (1 + \eps)/d$.
    Then $G$ contains a path of length at least $\eps^2 kd / 10$ with probability at least $1 - \exp(-\Omega_\eps(kd))$.
\end{theorem}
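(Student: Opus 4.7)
The approach is a depth-first search (DFS) analysis in the spirit of Krivelevich--Sudakov~\cite{KS13}, carefully calibrated to the vertex-expansion regime. Fix an ordering of $V(G)$ and run the standard DFS on $G_p$, querying each potential edge lazily as an independent $\Bernoulli(p)$ trial; maintain the tripartition $V(G) = U_t \cup A_t \cup T_t$ into unvisited, active-stack, and finished vertices. Since $A_t$ always induces a path in $G_p$, it suffices to exhibit a time $t$ at which $|A_t| \geq \varepsilon^2 kd / 10$ with failure probability $\exp(-\Omega_\varepsilon(kd))$. The standard DFS fact I use is that, at any time $t$, every edge of $G$ between $T_t$ and $U_t$ has been tested (and failed), and that these tests form an i.i.d.\ $\Bernoulli(p)$ sequence.

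Set $\alpha \defined \varepsilon/3$ and let $\tau$ be the first time at which $|T_\tau| = \lfloor \alpha kd \rfloor$; this is finite because the expansion hypothesis forces $|V(G)| \geq k(d+1) > \alpha kd$. Partition $T_\tau$ into $M \defined \lfloor \alpha d \rfloor$ pairwise-disjoint $k$-subsets $T^{(1)}, \ldots, T^{(M)}$ (the degenerate regime $\alpha d < 1$, i.e.\ $\varepsilon d = O(1)$, is handled separately via a single $k$-subset). Vertex expansion applied to each $T^{(i)}$ gives
\[
|N(T^{(i)}) \cap U_\tau| \;\geq\; kd - |T_\tau \setminus T^{(i)}| - |A_\tau| \;\geq\; (1-\alpha)\, kd - |A_\tau|.
\]
Since the $T^{(i)}$'s are pairwise disjoint, any $u \in U_\tau$ lies in $N(T^{(i)})$ for at most $e(u, T_\tau)$ values of $i$, so summing over $i$ yields
\[
e_G(T_\tau, U_\tau) \;\geq\; \sum_{i=1}^{M} |N(T^{(i)}) \cap U_\tau| \;\geq\; \alpha(1-\alpha)\, kd^2 - \alpha d\, |A_\tau| - O(kd).
\]

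Every such edge has been tested, so the number of Bernoulli queries satisfies $N_\tau \geq e_G(T_\tau, U_\tau)$. A Chernoff bound applied uniformly over all prefix sums of length at least $\alpha(1-\alpha)\, kd^2 / 2$ shows that, except on an event of probability $\exp(-\Omega_\varepsilon(kd))$, the number of successes is $S_\tau \geq (1-\varepsilon/10)\, p\, N_\tau$. Each success corresponds to a push onto the stack, so $S_\tau = |T_\tau| + |A_\tau| - c_\tau \leq \alpha kd + |A_\tau|$, where $c_\tau \geq 0$ counts DFS roots (its omission from the bound only helps). Combining and rearranging,
\[
|A_\tau| \;\geq\; \frac{\alpha kd\bigl[(1-\varepsilon/10)(1+\varepsilon)(1-\alpha) - 1\bigr] - O(kd)}{1 + (1-\varepsilon/10)(1+\varepsilon)\alpha}.
\]
Substituting $\alpha = \varepsilon/3$, the numerator is at least $17\varepsilon^2 kd / 90 - O(\varepsilon^3 kd + kd)$ and the denominator is $1+O(\varepsilon)$, so $|A_\tau| \geq \varepsilon^2 kd / 10$ for all sufficiently small $\varepsilon$, contradicting the assumption $|A_\tau| < \varepsilon^2 kd/10$.

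The main obstacle is choosing the stopping threshold $\alpha kd$ to balance two opposing forces: the expansion lower bound $e_G(T,U) \geq \alpha(1-\alpha) kd^2$ rewards larger $\alpha$, but the stack growth $|A_\tau|$ benefits when $\alpha$ is small enough that a $(1+\varepsilon)$-fraction of Bernoulli successes yields ``extra'' pushes beyond the $|T_\tau|$ threshold. Scaling $\alpha = \Theta(\varepsilon)$ is what produces $\Omega(\varepsilon^2 kd)$; in contrast, the naive stopping at $|T_\tau| = k$ (the smallest threshold at which vertex expansion applies at all) only yields a path of length $\Omega(\varepsilon k)$, losing a factor of $\varepsilon d$. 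A minor technical point is the degenerate regime $\varepsilon d = O(1)$, where the target length $\varepsilon^2 kd/10$ is itself only $O(k)$ and the argument reduces to a single $k$-subset application.
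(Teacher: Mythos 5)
Your overall strategy is the right one and matches the framework the paper points to (Krivelevich--Sudakov DFS analysis, as adapted in CDEK25): run a lazy-exposure DFS, stop when enough vertices are finished, tile $T_\tau$ by disjoint $k$-subsets to invoke the $(=\!k,d)$-expansion, double-count tested edges between $T_\tau$ and $U_\tau$, and compare against a Chernoff-controlled success rate via a union bound over prefix lengths. The individual DFS facts you use (all $T_\tau$--$U_\tau$ edges tested negative, pushes $=$ successes $+$ roots, $A_t$ spans a path) are correct, and the union bound over prefix sums is the right device to handle the random $N_\tau$.

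However, the closing numerics do not go through as written. Once you multiply $e_G(T_\tau,U_\tau)\geq \alpha(1-\alpha)kd^2-\alpha d|A_\tau|-O(kd)$ by $p=(1+\varepsilon)/d$, the residual error becomes $O(k)$, yet your final display carries a $-O(kd)$ into the numerator, so your stated bound $17\varepsilon^2kd/90 - O(\varepsilon^3kd + kd)$ is negative for small $\varepsilon$ and the contradiction fails. After replacing $O(kd)$ by the correct $O(k)$, the inequality $|A_\tau|\geq \varepsilon^2kd/10$ closes only when $\varepsilon^2 d$ exceeds an absolute constant, i.e.\ $\varepsilon \gtrsim d^{-1/2}$. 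This is a genuinely smaller parameter range than the $\varepsilon\gtrsim d^{-1}$ under which the cited theorem is valid (see the footnote in the paper following \Cref{thm:paths}); the extra loss is traceable to stopping at $|T_\tau|=\lfloor\alpha kd\rfloor$, which is not a multiple of $k$ and wastes up to $k$ vertices in the tiling while still charging the full $\alpha kd$ against $S_\tau$. Fixing this is routine: stop instead at $|T_\tau|=Mk$ for an integer $M$ chosen near the optimiser $M^*\approx \varepsilon d/2$ of $g(M)=Mk\bigl[\beta(1-(M-1)/d)-1\bigr]$; the rounding error is then only $O(k/d)$ and the argument goes through for all $\varepsilon d$ above an absolute constant. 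Finally, in the regime $\alpha d<1$ you have $\lfloor\alpha kd\rfloor<k$, so $T_\tau$ admits no $k$-subset; the fallback there should be to stop at $|T_\tau|=k$ and use a single application of expansion, which does give $|A_\tau|\gtrsim \varepsilon k \gtrsim \varepsilon^2kd$ when $\varepsilon d=O(1)$.
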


The asymptotic notation $\Omega_\eps$ above is taken as $k\to\infty$, and the subscript $\eps$ indicates that the implicit constant depends on $\eps$.
We use $N(S)$ to denote the set of points in $V\setminus S$ which are adjacent to some element of the set $S$.
Moreover, it may be noted from the proof of \Cref{thm:paths} that the precise conditions on the value of $\eps$ are that $\eps \in (Cd^{-1}, 0.1)$ for an absolute constant\footnote{The lower bound on $\eps$ in terms of $d$ and the constant $C$ was not made explicit in \cite{CDEK25}, but follows easily from the proof. Alternatively, one can consider \Cref{thm:paths} in the regime where $\eps < 0.1$ is fixed and $d$ is taken sufficiently large.} $C$.

The notion of vertex-expansion in \Cref{thm:paths} differs a little from the standard form of vertex expansion.
Indeed, the standard definition is that a $(k,a)$-vertex expander is a graph wherein every $S\sseq V(G)$ with $\card{S}\leq k$ has $N(S)\geq a\card{S}$.
For ease of discussion, we make the following definition.

\begin{definition}
    A graph $G$ is an \emph{\expander} if every $S\sseq V(G)$ with $\card{S} = k$ has $\card{N(S)} \geq dk$.
\end{definition}

It was conjectured in \cite{CDEK25} that, under the assumptions in \Cref{thm:paths}, there should also be a cycle of length $\Omega(kd)$.
Indeed, our main result is the following, which resolves this conjecture.

\begin{theorem}
    \label{thm:main}
    Fix $d > 1$ and $\eps \geq \eps_0(d) > 0$, let $k\in \NN$  and let the graph $G$ be an \expander.
    Then, if $p=(1 + \eps)/d$, then $G_p$ contains a cycle of length at least $\Omega_\eps(kd)$  with probability at least $1 - \exp(-\Omega_\eps(k/d))$.
\end{theorem}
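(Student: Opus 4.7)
The plan is sprinkling combined with a P\'osa-rotation-based closing argument built on top of \Cref{thm:paths}. I would first set $p_1 = (1+\eps/2)/d$ and choose $p_2 = \Theta_\eps(1/d)$ satisfying $p = 1 - (1-p_1)(1-p_2)$, so that $G_p$ is distributed as the union of independent random graphs $G_{p_1}$ and $G_{p_2}$. Applying \Cref{thm:paths} to $G_{p_1}$ (with $\eps/2$ in place of $\eps$) yields, with failure probability $\exp(-\Omega_\eps(kd))$, a path of length at least $c_1 \eps^2 kd$ in $G_{p_1}$; I take $P$ to be a longest such path. This failure bound is far smaller than our target, so the problem reduces to closing $P$ into a long cycle using only the extra randomness in $G_{p_2}$.

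For the closing step, I would run P\'osa rotations on $P$ inside $G_{p_1}$, fixing one endpoint, and let $S$ denote the resulting set of reachable left endpoints. Every rotation produces another longest path in $G_{p_1}$ supported on the vertex set $V(P)$, so maximality forces $N_{G_{p_1}}(u) \sseq V(P)$ for each $u \in S$. Granted $\card{S} \geq k$, I would extract a $k$-subset $S_0 \sseq S$ and apply the $(=k,d)$-vertex expansion of $G$ to obtain $\card{N_G(S_0)} \geq kd$. Each $G$-neighbour of $S_0$ lying off $V(P)$ corresponds to a $G$-edge forced to be absent in $G_{p_1}$, while each $G$-neighbour on $V(P)$ at path-distance at least half the length of $P$ from its $S_0$-endpoint (along the relevant rotated path from $S_0$ to the fixed endpoint) yields a candidate long chord. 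A counting argument should show that, among the $\Omega_\eps(kd)$ candidate $G$-edges obtained in this way, each appears in $G_{p_2}$ independently with probability $p_2 = \Theta_\eps(1/d)$, so the probability that none appears is bounded by $(1-p_2)^{\Omega_\eps(kd)} \leq \exp(-\Omega_\eps(k))$, comfortably inside the target.

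The main obstacle is ensuring that the P\'osa rotation endpoint set actually reaches size $k$. The classical P\'osa machinery relies on vertex-expansion at \emph{all} scales up to $k$, whereas our hypothesis only supplies expansion at the single threshold scale $k$; in particular, a path endpoint of low $G$-degree may well produce an initial $S$ that does not grow under rotations. I would circumvent this by pairing the rotation with a direct expansion argument: whenever $\card{S} = s < k$, I would augment $S$ to a size-$k$ set $S^{\ast}$ by padding with additional vertices of $V(P)$ outside the rotation reach, apply the $(=k,d)$-expansion to $S^{\ast}$, and argue that either $\card{N_G(S^{\ast}) \setminus V(P)}$ is large --- whence the $G_{p_2}$-randomness with high probability produces an edge extending $P$, contradicting maximality in $G_p$ and letting us iterate on a longer path --- or $\card{N_G(S^{\ast}) \cap V(P)}$ is large, in which case a long chord is likely to appear in $G_{p_2}$ directly. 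The slack between the $\exp(-\Omega_\eps(kd))$ failure bound in \Cref{thm:paths} and the weaker $\exp(-\Omega_\eps(k/d))$ target we aim for --- a bound that exactly matches the $\Bernoulli(p_2)$ calculation for detecting one of $\Omega_\eps(k)$ candidate $G$-edges --- provides the budget needed for this bootstrap.
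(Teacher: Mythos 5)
Your high-level strategy (sprinkling plus a post-processing step to close a long path into a long cycle) shares the sprinkling idea with the paper, but the paper does not take \Cref{thm:paths} as a black box and apply P\'osa rotations; it instead runs a modified DFS that keeps track of ``long edges'' of $G$ (pairs at large tree-distance in the DFS forest), organizes the search into blocks of $k$ processed vertices, and uses a reserve of ``safe'' vertices to recover whenever a block fails to produce enough positive queries. The core lemma shows that if fewer than $\eps k$ long edges are ever created, the active path grows linearly forever, a contradiction in a finite graph; sprinkling then hits one long edge. That machinery is precisely how the paper gets around the obstruction you identify -- expansion only at exactly scale $k$.

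Your padding workaround for that obstruction, however, has a genuine gap. Write $S^* = S \cup S'$ where $S$ is the P\'osa endpoint set (possibly of size $s \ll k$) and $S'$ is the set of $k - s$ padding vertices. The expansion hypothesis gives $\card{N_G(S^*)} \geq kd$, and your dichotomy between $N_G(S^*)\setminus V(P)$ and $N_G(S^*)\cap V(P)$ is of course exhaustive, but neither branch yields usable edges. In the first branch, an edge from a padding vertex $v\in S'\subset V(P)$ to a vertex $w\notin V(P)$ does not extend a longest path: $v$ is an interior vertex of $P$ and of every rotated copy of $P$, so $vw$ is merely a pendant edge and contradicts nothing. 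Only edges from true rotation endpoints $S$ can extend $P$, and $\card{N_G(S)}$ is exactly the quantity you cannot lower-bound when $s < k$. In the second branch, you need chords between vertices at path-distance $\Omega(kd)$, but $\card{N_G(S^*)\cap V(P)}$ being large gives no control on where these neighbours sit along $P$; they could all be within distance $O(k)$ of $S^*$, producing only short cycles. Since all the expansion could be carried by $S'$, whose edges help in neither case, the argument does not close. Fixing this would require either the structural tracking the paper builds into its DFS, or a genuinely different mechanism for guaranteeing that the neighbourhood of a \emph{small} endpoint set is controlled -- exactly what single-scale expansion refuses to provide.
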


We will deduce this result from \Cref{thm:main-technical}, which shall give more precise information about the dependencies and constants involved.
Where relevant, we use the notation in \cite{CDEK25}.
For a broad introduction to random graphs, see \cite{Bol98,FK15}.

Our proof of \Cref{thm:main} will be outlined in \Cref{subsec:outline}.
Then, in \Cref{sec:preliminaries} we provide a description of the modified DFS algorithm we will employ, along with the standard probabilistic bounds that we will make use of.
The proof of \Cref{thm:main} is contained in \Cref{sec:proof}.
Finally, in \Cref{sec:conclusion} we suggest some directions for future work.


\subsection{Proof outline}
\label{subsec:outline}

As is the case with much of the work in this area, the proof of \Cref{thm:main} runs by careful analysis of a modified depth-first search (DFS) algorithm.
The precise details of the algorithm will be discussed in \Cref{subsec:dfs}, but the idea of the proof is as follows.

We will use sprinkling, writing $p = 1 - (1 - p_1) (1 - p_2)$ with $p_1 \approx p$ and $p_2$ small.
We expose edges with probability $p_1$ for running the DFS, and then expose edges in a second round with probability $p_2$.
This second round will reveal a few more edges, with which we are likely to find a long cycle.
Indeed, we seek an edge $xy$ of $G$ such that there is a long path between $x$ and $y$ in $G_{p_1}$; in fact, we find many of these edges, so that it is likely that one of them is present in $G_{p_2}$.

The DFS algorithm produces a forest in $G_{p_1}$; following the techniques of \cite{CDEK25} we know that this forest $F$ must include a long path $A$.
Indeed, we call an edge $xy$ of $G$ \emph{long} if there is a path of length at least $\alpha kd$ from $x$ to $y$ in $G_{p_1}$.
We will show that, under the assumption that there are few long edges, the path $A$ is likely to grow at (at least) a constant rate $\gamma \approx \eps(1-p)/2$, which will yield a contradiction as the path cannot continue growing indefinitely inside the finite graph $G$.

To be more precise, we will process the vertices in ``blocks'' of $k$ vertices, and the fact that the path continues growing will follow by an induction, which we now outline.
While the path grows linearly, we will be able to effectively bound the number of vertices close to the end of the path, and show that the final block has few neighbours amongst the processed vertices in previous blocks (to avoid producing a long edge).
Thus many neighbours (in $G$) of the final block of $k$ vertices of the path will have to be in the unexplored set $U$, and there will be enough of these that, with high probability, many of them are also neighbours in $G_{p_1}$.

However, we have no control over the size of the graph $G$, which may in particular be far larger than $k$.
Thus we cannot hope for every batch of $O(k)$ consecutive queries in the DFS algorithm to have roughly the expected number of positive answers, even as $k$ grows.
To deal with the situations in which we get far fewer positive responses to queries than expected (resulting in a ``bad'' block), we reserve a collection of ``safe'' vertices, which are spread far apart along $A$. 
When we run into a bad block, we abandon the end of the path, and return to a safe vertex, restarting the DFS algorithm from there.
We show that the probability of a block being bad is exponentially small, and so the number of bad blocks is far smaller than the number of safe vertices, and so the algorithm can continue.


\section{Preliminary results}
\label{sec:preliminaries}


\subsection{Depth-first search}
\label{subsec:dfs}

We use a modified DFS algorithm in our proof.
While DFS algorithms are standard, our modifications are significant enough that it seems worthwhile to fully describe the algorithm. 
When appropriate, we align our notation with that in \cite{CDEK25}.

The DFS algorithm tracks five sets which update over time, as follows.
\begin{itemize}[itemsep=-0.3em]
    \item $W(t)$, the \emph{processed} vertices,
    \item $A(t)$, the \emph{active} vertices,
    \item $U(t)$, the \emph{unprocessed} vertices,
    \item $S(t)$, the \emph{safe} vertices, and
    \item $T(t)$, the \emph{trashed} vertices.
\end{itemize}

We assume throughout the proof that there is a total order on the vertex set $V$ of $G$, and so we may talk about considering vertices ``in order''.
Both $A$ and $S$ are treated as stacks, following a last-in first-out protocol.
We will refer to the vertex most recently added to $A$ as the \emph{head} or \emph{top} of $A$.
Throughout the discussion of the algorithm, we will say that we \emph{query an edge} $e$ of $G$ to mean that we reveal whether or not $e$ is present in $G_p$.
If $e$ is in $G_p$, we will say that the query gave a \emph{positive} result, or, if $e$ is not present in $G_p$, that it gave a \emph{negative} result.
When the algorithm starts, at $t=0$, all vertices are unprocessed. 
Formally,
\begin{equation*}
    W(0) = A(0) = S(0) = T(0) = \emptyset \quad \text{and} \quad U(0) = V.
\end{equation*}
The DFS process is then run in \emph{steps} indexed by $t$ until both $A(t)$ and $U(t)$ are empty.
Step $t+1$ of the basic algorithm is as follows; we will add some further conditions in due course.

\begin{itemize}[itemsep=-0.1em]
    \item If $A(t)$ is empty and $U(t)$ is non-empty, then move the first vertex $v$ from $U(t)$ to $A(t)$, i.e. $A(t+1) = A(t)\union\set{v}$ and $U(t+1) = A(t)\setminus\set{v}$.
    \item Otherwise, let $v$ be the top vertex of $A(t)$. For each neighbour $u$ of $v$ in $U(t)$ in turn, query the edge $uv$ if it has not already been queried, and continue until either a query returns a positive result or all edges from $v$ to $U(t)$ have been queried.
    \vspace{-0.6em}
    \begin{itemize}[itemsep=-0.1em]
        \item If $vu$ is found to exist for some $u\in U(t)$, then move $u$ from $U$ to $A$, so $U(t+1) = U(t) \setminus\set{u}$ and $A(t+1) = A(t) \union\set{u}$.
        \item If all edges are queried and no positive result is found, then move $v$ from $A$ to $W$, i.e. put $A(t+1) = A(t) \setminus\set{v}$ and $W(t+1) = W(t)\union\set{v}$.
    \end{itemize}
\end{itemize}

The graph of positively queried edges forms a forest; let $F(t)$ be the graph on $V$ with edge set consisting of all edges which have been queried with positive results in steps up to and including $t$.
We will consider the component of $F(t)$ containing $A(t)$ to be rooted at the first vertex in $A(t)$; we may thus consider vertices in $W(t)$ in this component as descendants of vertices in $A(t)$.

Say that a vertex $v$ \emph{is processed at time $t$} if $v\notin W(t-1)$ and $v\in W(t)$.
Let $t_i$ be the first time at which precisely $ik$ vertices have been processed (noting that only one vertex can be processed at each time step).

When running queries, for ease of analysis we will use the following method of sampling.
Let $X_{i,j}$ be independent Bernoulli random variables with parameter $p$ for all $i,j\in\NN$.
Sample $X_{1,1},X_{1,2},\dotsc$ in response to queries in steps $1,2,\dotsc,t_1$, and then $X_{2,1},X_{2,2},\dotsc$ for queries in steps $t_1+1,t_1+2,\dotsc,t_2$, and so on.

We will refer to the vertices processed at times in the range $[t_{i-1} + 1, t_i]$ as a \emph{block}, and let $B_i$ be the set of vertices processed during this time interval. 
Say that block $B_i$ is \emph{good} if it -- or, more precisely, the sequence of random variables $X_{i,1},X_{i,2},\dotsc$ -- satisfies the following property.

\begin{definition}
    \label{def:good}
    For $s\geq 1$, let $Y_{i,s}$ be the number of $X_{i,1},X_{i,2},\dotsc,X_{i,s}$ which equal $1$.
    Then a block $B_j$ is \emph{good} if, for all $s\in[dk/2, dk]$, we have $Y_{i,s} \geq (1+\eps)d^{-1}s$.
    A block is \emph{bad} if it is not good.
\end{definition}

We are now ready to define the final steps of our modified DFS algorithm.

\begin{itemize}[itemsep=-0.1em]
    \item If $t = t_{2ik}$ for an integer $i$, and the previous $2k$ blocks have all been good, then let $x$ be the first vertex in $B_{(2i-1)k}$, and let $y$ be the vertex in $A(t)$ closest to $x$. Add $y$ to $S$, i.e. set $S(t+1) = S(t) \union\set{y}$.
    \item If $t = t_j$ for an integer $j$, and the block $B_j$ just processed was bad, then let $y$ be the top vertex in $S(t)$, which is in $A(t)$. If either $S(t)$ is empty or the top vertex of $S(t)$ is not in $A(t)$, then the algorithm stops and fails. Move all vertices above $y$ in $A(t)$ and their descendants in $W(t)$ into $T(t)$, and remove $y$ from $S(t)$.
\end{itemize}

We will prove in due course that, with high probability, the algorithm does not fail and that, at all times $t$ we have $S(t) \sseq A(t)$.
We will refer to any vertex in $S(t)$ for any $t$ as \emph{safe}, even if we are considering a time at which the vertex has already been removed from $S$.

Before proceeding, we make a few simple observations about the algorithm.

\begin{observation}
    \label{obs:dfs}
    The following points all hold deterministically for all $t$.
    \vspace{-0.5em}
    \begin{itemize}[itemsep=-0.3em]
        \item The set $A(t)$ spans a path in $F(t)$ (which is not necessarily induced in $G_p$),
        \item There are no edges of $G_p$ between $W(t)$ and $U(t)$,
        \item $F(t)$ is a forest on $W(t)\union A(t)\union T(t)$, and
        \item Any path from a vertex of $T(t)$ to a vertex of $A(t)\union W(t)$ must pass through a safe vertex.
    \end{itemize}
\end{observation}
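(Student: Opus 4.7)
The plan is to prove all four invariants simultaneously by induction on the step index $t$. The base case $t=0$ is immediate: $W(0)$, $A(0)$, $T(0)$ and $F(0)$ are all empty, so every assertion is vacuously true. For the inductive step I would walk through the five possible transition types encoded in the algorithm description of \Cref{subsec:dfs}: (A) starting a new DFS component when $A$ is empty and $U$ is nonempty; (B) a positive query adding a new head to $A$; (C) exhausting the head and demoting it from $A$ to $W$; (D) adding a safe vertex to $S$; and (E) a bad-block pop.

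Cases (A), (B), and (D) preserve all four invariants essentially for free. In (A) the new active set is a singleton, trivially a path of length zero, and none of $F$, $W$, $T$ change. In (B) the new head $u$ arrives from $U$ and so has no prior incident $F$-edges; the newly added $F$-edge $vu$ therefore extends the path in $A$ by one vertex at its top and creates no cycle in $F$, giving (i) and (iii), while no edges involving $W$ or $T$ are introduced. (D) only touches $S$. The content of case (C) lies in invariant (ii): at the moment $v$ is demoted to $W$ every edge from $v$ to the current $U(t)$ has just been queried negative, so $v$ has no $G_p$-edge to $U(t+1)=U(t)$; and since $U$ is monotonically non-increasing across all subsequent steps (only (A) and (B) alter $U$, and both do so by removing vertices), the $W$-to-$U$ non-adjacency of older members of $W$ is inherited from the hypothesis without further work.

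The substantive case is (E). When the block $B_j$ just processed is bad, we let $y$ be the top of $S(t)$ and, under the hypothesis $y\in A(t)$, move every vertex of $A(t)$ strictly above $y$ together with each of their $F(t)$-descendants in $W(t)$ into $T$. By invariants (i) and (iii) at time $t$, the vertex set thus trashed is precisely the $F(t)$-subtree rooted at the unique child of $y$ on the stack $A(t)$. No $F$-edges are added or removed, so $F(t+1)=F(t)$ remains a forest on $W(t+1)\cup A(t+1)\cup T(t+1)$, giving (iii). The truncated stack is an initial sub-stack of $A(t)$ and so still a path in $F$, giving (i), and (ii) is preserved since $W$ only shrinks and $U$ does not change. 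For (iv), the newly trashed subtree is attached to $A(t+1)\cup W(t+1)$ in $F(t+1)$ by exactly one edge, namely the $F$-edge from $y$ to its former child; hence any $F$-path from a newly trashed vertex to $A(t+1)\cup W(t+1)$ must cross this edge and so pass through $y$, which is safe since it was just removed from $S$. For vertices trashed at earlier pops, the inductive hypothesis already forces any $F$-path to $A(t)\cup W(t)$ through some previously popped safe vertex, and since no new $F$-edges incident to $T$ are ever created (queries only go from the head of $A$ into $U$), this remains true at time $t+1$.

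The main obstacle is the careful bookkeeping in case (E): one must verify that popping $y$ really does detach a single $F(t)$-subtree from the rest of its $F$-component by severing exactly the one edge at $y$. This rests on the classical DFS fact, maintained in tandem via (i) and (iii), that the $F(t)$-component containing $A(t)$ is a tree whose current rightmost root-to-leaf path is the stack $A(t)$ itself, so that ``descendants on the stack'' and ``descendants in the $F$-tree'' agree. A small caveat, which the author already flags immediately after the algorithm definition, is that case (E) presupposes $y\in A(t)$, i.e.\ the auxiliary claim $S(t)\sseq A(t)$; this should be carried along as part of the inductive hypothesis, and in fact follows from the same case analysis, since safe vertices enter $S$ only while they sit in $A$ and leave $S$ only at the very instant the stack is truncated down to them.
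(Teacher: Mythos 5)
Your induction-on-time-steps argument correctly verifies all four invariants, and since the paper states \Cref{obs:dfs} without proof (treating it as immediate from the algorithm description), your proposal is the natural written-out version of what the author leaves implicit. The case analysis (A)--(E) is exhaustive, and the substantive points — that (ii) propagates because $U$ is non-increasing and all edges from a demoted vertex to the current $U$ have been queried negative, and that a pop in case (E) severs exactly the one $F$-edge from $y$ to its stack-child so that any $F$-path from the newly trashed subtree to $A \cup W$ must pass through the safe vertex $y$ — are exactly right.

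There is, however, one inaccuracy in your closing caveat. You assert that $S(t)\sseq A(t)$ ``follows from the same case analysis, since safe vertices enter $S$ only while they sit in $A$ and leave $S$ only at the very instant the stack is truncated down to them.'' This is not a deterministic fact. A safe vertex can leave $A$ via the ordinary demotion step (your case (C)) — if the active path retreats past it through exhausted queries — while remaining in $S$, at which point $S(t)\sseq A(t)$ fails. This is precisely why the algorithm is given an explicit failure mode (``If either $S(t)$ is empty or the top vertex of $S(t)$ is not in $A(t)$, then the algorithm stops and fails''), and why the paper defers $S(t)\sseq A(t)$ to the probabilistic Lemma~\ref{lem:key}\ref{item:containment}, where it is shown to hold under the growth estimates of \ref{item:growing} and the bad-block bound. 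Fortunately, this misstatement does not damage your proof of \Cref{obs:dfs} itself: in case (E) you only argue conditional on $y\in A(t)$, and if that hypothesis fails the algorithm halts, so there is no time $t+1$ for which the invariants need to be checked. You should simply replace the phrase ``and in fact follows from the same case analysis'' with a remark that when the hypothesis $y\in A(t)$ fails the algorithm terminates, so the induction has nothing further to prove; the inclusion $S(t)\sseq A(t)$ is a separate, non-deterministic fact established later.
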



\subsection{Concentration inequalities}
\label{subsec:concentration}

We will use the following Chernoff-type bound on the tail of the binomial distribution.
For more details, see e.g. \cite[Section 27.4]{FK15}.

\begin{theorem}
    \label{thm:chernoff}
    If $X$ is a binomial random variable with mean $\mu > 0$, then for every $0 \leq t \leq \mu/2$,
    \begin{equation*}
        \prob{\abs{X - \mu} > t} < 2 \exp \p[\Big]{-\frac{t^2}{3\mu}}.
    \end{equation*}
\end{theorem}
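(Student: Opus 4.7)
The plan is to carry out the textbook Chernoff argument on the two tails separately and then combine them. Write $X = \sum_{i=1}^n Y_i$ with $Y_1, \dots, Y_n$ independent Bernoulli variables of parameters $p_1, \dots, p_n$ summing to $\mu$. For any $s > 0$, Markov's inequality gives $\prob{X \geq a} \leq e^{-sa} \expec{e^{sX}}$, and by independence together with the elementary bound $1 + x \leq e^x$ one controls the moment generating function via
$$\expec{e^{sX}} = \prod_{i=1}^n \p{1 + p_i(e^s - 1)} \leq \exp\p{\mu(e^s - 1)}.$$

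For the upper tail $\prob{X \geq \mu + t}$ I would take $a = \mu + t$ and optimise in $s > 0$: the standard choice $s = \ln(1 + t/\mu)$ yields
$$\prob{X \geq \mu + t} \leq \exp\p[\big]{-\mu\,\phi(t/\mu)}, \quad \text{where} \quad \phi(x) = (1+x)\ln(1+x) - x.$$
For the lower tail I would repeat the argument with $e^{-sX}$ in place of $e^{sX}$ (still with $s > 0$), obtaining $\prob{X \leq \mu - t} \leq \exp\p[\big]{-\mu\,\psi(t/\mu)}$ where $\psi(x) = (1-x)\ln(1-x) + x$. Adding the two one-sided estimates accounts for the factor of $2$ in the claimed bound.

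The only substantive remaining step is the elementary inequality $\phi(x), \psi(x) \geq x^2/3$ valid for $x \in [0, 1/2]$, which converts the exponents into the stated $-t^2/(3\mu)$. Both follow from basic calculus: $\phi(0) = \phi'(0) = 0$ with $\phi''(x) = 1/(1+x) \geq 2/3$ on $[0, 1/2]$, so integrating twice gives $\phi(x) \geq x^2/3$; similarly $\psi''(x) = 1/(1-x) \geq 1$ on the same interval yields the even stronger bound $\psi(x) \geq x^2/2$. The hypothesis $t \leq \mu/2$ is precisely what places $t/\mu$ in $[0, 1/2]$ so that these inequalities apply, and pinning down the constant $3$ in the exponent is the only place where any real care is required; the remainder is routine MGF manipulation. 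I expect no genuine obstacle beyond this calculus verification, which is why the result is quoted from \cite{FK15} in the paper rather than proved from scratch.
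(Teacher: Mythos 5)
Your proof is correct, and it is the standard Chernoff/MGF argument; the paper itself does not prove \Cref{thm:chernoff} but cites it from \cite[Section 27.4]{FK15}, which uses essentially the same exponential-moment derivation. The one place worth a sanity check is the constant: on $[0,1/2]$ you have $\phi''(x)=1/(1+x)\geq 2/3$, which integrated twice from $0$ gives $\phi(x)\geq x^2/3$ exactly as needed, and $\psi\geq x^2/2$ is even stronger, so summing the two one-sided tails indeed yields the factor $2$ with exponent $-t^2/(3\mu)$ under the hypothesis $t\leq\mu/2$. No gap.
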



\section{Proof of Theorem \ref{thm:main}}
\label{sec:proof}

Our next result contains explicit dependencies and constants; we shall use it to prove \Cref{thm:main}.

\begin{theorem}
    \label{thm:main-technical}
    Fix positive real numbers $d > 1$ and $\eps \geq 300d^{-1/2}$, let $k\in \NN$  and let $G$ be a graph on at least $k$ vertices such that every $S \sseq V(G)$ with $\card{S} = k$ satisfies $\card{N(S)} \geq kd$.
    Let $p = (1 + 3\eps) / d$.
    Then, the probability that $G_p$ contains a cycle of length at least $\eps^2kd / 100$ is at least $1 - 2\exp(-\eps^2 k / 200d)$.
\end{theorem}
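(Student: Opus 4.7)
The plan is to follow the sprinkling strategy outlined in \Cref{subsec:outline}. I would write $p = 1 - (1-p_1)(1-p_2)$ with $p_1 \defined (1+2\eps)/d$, so that $p_2 = \eps/(d - 1 - 2\eps) \geq \eps/d$ in the range of parameters allowed; then run the modified DFS of \Cref{subsec:dfs} on $G_{p_1}$, using the Bernoulli$(p_1)$ variables $X_{i,j}$. Following the outline, call an edge $xy \in E(G)$ \emph{long} if $G_{p_1}$ contains an $x$-to-$y$ path of length at least $\alpha kd$, with $\alpha \defined \eps^2/100$, and write $L$ for the (random) number of long edges. The theorem reduces to two claims: (I) $L \geq \eps k/200$ with probability at least $1 - \exp(-\eps^2 k/(200d))$, and (II) conditional on $L \geq \eps k/200$, some long edge additionally lies in $G_{p_2}$, which together with the corresponding long $G_{p_1}$-path closes up to a cycle of length at least $\alpha kd$ in $G_p$; the failure probability here is at most $(1-p_2)^{\eps k/200} \leq \exp(-\eps^2 k/(200 d))$, and the two bounds combine to match the target.

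Part (II) is immediate from the independence of the two sprinkling rounds. The substance lies in Part (I), which I would establish by a block-by-block analysis. By \Cref{thm:chernoff} applied to $Y_{i,s}$ at each $s \in [dk/2, dk]$ with $\EE Y_{i,s} = (1+2\eps)s/d$, and a union bound over $s$, each block is bad with probability at most $\exp(-\Omega(\eps^2 k))$. The deterministic heart of the argument is a growth claim: under the standing assumption $L < \eps k/200$, every good block adds at least $\gamma k$ vertices to $|A|$, for some $\gamma = \Theta(\eps)$. I would prove this by classifying the neighbours in $G$ of the top block $P$ (the $k$ topmost vertices of the current $A$) into four buckets:
\begin{itemize}[itemsep=-0.2em]
    \item at most $\alpha kd$ lying in the top $\alpha kd + k$ vertices of $A$ but outside $P$;
    \item fewer than $\eps k/200$ lying below the top $\alpha kd + k$ of $A$, as each such neighbour certifies a long edge via the $A$-path of length at least $\alpha kd$;
    \item a controlled number in $W \cup T$, bounded via the DFS observation (\Cref{obs:dfs}) that $G_{p_1}$ has no edges between $W$ and the current $U$, combined with an induction along the block index;
    \item and, by the expansion hypothesis $\card{N_G(P)} \geq dk$, at least $(1 - o_\eps(1))\,dk$ lying in $U$.
\end{itemize}
In a good block, $Y_{i,s} \geq (1+\eps)s/d$ on $s \in [dk/2, dk]$, so the abundance of $U$-neighbours forces the number of positive queries in the block to exceed $(1+\gamma)k$, giving $\card{A(t_i)} - \card{A(t_{i-1})} = q_i - k \geq \gamma k$.

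Next I would argue algorithmic survival and close by contradiction. Safe vertices accumulate at rate $1/(2k)$ per good block, and with $M = O(\alpha d/\gamma) = O(\eps d)$ good blocks sufficing to push $\card{A}$ past $\alpha kd + k$, a union bound over the first $M$ blocks shows that the number of bad blocks is at most $M \exp(-\Omega(\eps^2 k))$, which is much smaller than the $\Omega(M/k)$ safe vertices accrued, except with probability at most $\exp(-\Omega(\eps^2 k/d))$. Hence the algorithm never aborts at a reset, and the net growth of $A$ over these $M$ blocks (sum of good-block gains minus trashing from the few bad blocks, which is controlled by the spacing of safe vertices) exceeds $\alpha kd + k$. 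But at such a state the classification above forces the ``far from the top'' bucket to contain $\Omega(\eps k)$ vertices, each contributing a long edge, contradicting $L < \eps k/200$. This proves Part (I) and completes the proof.

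The main obstacle will be making the per-block growth claim rigorous, specifically bounding the neighbours of $P$ in $W \cup T$. A neighbour $y \in W$ of $v \in P$ cannot yield an edge of $G_{p_1}$ unless $y$ is a tree-descendant of $v$ in $F$ (otherwise the $W$-$U$ disjointness of $G_{p_1}$ would have forced $y$ into $A$ at the time); but edges of $G \setminus G_{p_1}$ can freely connect $P$ to $W$, and these must be controlled by an inductive count that tracks tree depth and block index together. I expect the safe-vertex apparatus, which preserves the forest above safe vertices through bad-block resets, to be essential in propagating this inductive invariant across the $M$-block run and in ensuring that trashed vertices (which otherwise behave like $W$-vertices for expansion purposes) do not accumulate too many $P$-incident edges either.
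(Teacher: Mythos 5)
Your proposal follows essentially the same route as the paper: the same sprinkling with $p_1 = (1+2\eps)/d$ and $p_2 \geq \eps/d$, the same notion of a long edge with $\alpha = \eps^2/100$, the same good/bad block dichotomy, the same safe-vertex apparatus for surviving bad blocks, and the same per-block growth rate $\gamma = \Theta(\eps)$. These choices line up with the paper's Lemmas~\ref{lem:likely-good}, \ref{lem:few-bad}, and~\ref{lem:key}, and Part~(II) of your reduction (sprinkling closes a cycle) is fine.

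There are, however, two genuine gaps. The first is the closing contradiction. You stop after $M = O(\eps d)$ good blocks, once $\card{A} > \alpha kd + k$, and assert that the bucket classification then forces $\Omega(\eps k)$ long edges. It does not: your bucket~4 (neighbours in $U$) carries no upper bound, and the expansion hypothesis only gives $\card{N_G(P)} \geq dk$, which is entirely consistent with all but $O(\eps^2 dk)$ of those neighbours lying in $U$. In that case the DFS simply continues, the path keeps growing, and no long edges are produced — there is nothing to contradict. The paper instead derives the contradiction from the global behaviour of the process: \Cref{lem:key}\ref{item:safe-count}--\ref{item:containment} together with \Cref{lem:few-bad} give $\card{A(t_i)} > \card{S(t_i)} > i/4k$ for every $i$ at which $t_i$ is defined, so the active path and safe set would grow without bound as blocks accumulate, yet the DFS must terminate on the finite graph $G$ with $A$ empty. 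The argument cannot be truncated at $O(\eps d)$ blocks; it needs the process to run out.

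The second gap is one you flag yourself: bounding the neighbours of the top block in $W \cup T$. Your sketch tracks which $P$-to-$W$ pairs can carry edges of $G_{p_1}$ (via tree-descendancy and the $W$–$U$ disjointness of \Cref{obs:dfs}), but this is somewhat of a detour: the relevant count is of $G$-neighbours, not $G_{p_1}$-neighbours, classified by the tree distance $\rho$ in $F$ — those within $\alpha kd$ are ``near'' and must be bounded, while those farther away yield long edges and are covered by the standing assumption. Bounding the near ones is exactly the content of \Cref{lem:key}\ref{item:distances}--\ref{item:safe-region}, which requires a simultaneous induction on all six invariants of the lemma (the safe-vertex spacing, the location of paths between old and new blocks, the density of vertices near each safe vertex, and the growth rate feed into one another). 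This is where the paper spends most of its effort, and the ``inductive count tracking tree depth and block index together'' you gesture at would need to be carried out in full to close the argument.
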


It is immediate that \Cref{thm:main-technical} implies \Cref{thm:main}. 
Indeed, other than replacing $\eps$ by $3\eps$ (which is for notational convenience), \Cref{thm:main} can be deduced from \Cref{thm:main-technical} by simply hiding the various constants in asymptotic notation.

Throughout this proof, we will run the modified DFS algorithm as described in \Cref{subsec:dfs} with percolation probability $p_1 = (1+2\eps)/d$, reserving a small amount of our probability ($p_2 > \eps / d$) to be exposed at the very end of the proof for sprinkling.
First we prove a concentration result concerning the DFS algorithm, stating that blocks are very likely to be good (as in \Cref{def:good}).

\begin{lemma}
    \label{lem:likely-good}
    The probability that block $j$ is good is at least $1 - dk \exp(-\eps^2k/12)$, and this occurs independently of all other blocks.
\end{lemma}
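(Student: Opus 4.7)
The plan is a direct application of the Chernoff bound from \Cref{thm:chernoff} to each threshold $s$, followed by a union bound over $s$, with independence across blocks coming from the construction of the $X_{j,i}$ as mutually independent Bernoulli random variables.

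First, fix the block index $j$, and note that $Y_{j,s}$ is binomial with parameters $s$ and $p_1 = (1+2\eps)/d$, so $\mu_s \defined \Expec[Y_{j,s}] = s(1+2\eps)/d$. The event that block $j$ fails the condition at threshold $s$ is $\set{Y_{j,s} < (1+\eps)s/d} = \set{Y_{j,s} < \mu_s - s\eps/d}$, so we take $t_s \defined s\eps/d$ in \Cref{thm:chernoff}. The hypothesis $t_s \leq \mu_s/2$ becomes $\eps/d \leq (1+2\eps)/(2d)$, i.e.\ $2\eps \leq 1+2\eps$, which always holds, so Chernoff is applicable.

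Plugging in, I obtain
\begin{equation*}
    \prob{Y_{j,s} < (1+\eps)s/d} \;<\; 2\exp\p[\Big]{-\frac{t_s^2}{3\mu_s}} \;=\; 2\exp\p[\Big]{-\frac{s\eps^2}{3d(1+2\eps)}}.
\end{equation*}
For $s \geq dk/2$ and (say) $\eps \leq 1/2$, the right-hand side is at most $2\exp(-k\eps^2/12)$ (the hypothesis $\eps \geq 300d^{-1/2}$ together with $d>1$ being large relative to $1$ is compatible with such a bound; one may easily tighten constants if one wishes to push the numerology). Next I union-bound over the at most $dk$ integer values of $s$ in $[dk/2, dk]$:
\begin{equation*}
    \prob{B_j \text{ is bad}} \;\leq\; \sum_{s=\lceil dk/2\rceil}^{dk}\prob{Y_{j,s} < (1+\eps)s/d} \;\leq\; dk\exp(-\eps^2 k/12),
\end{equation*}
absorbing the factor of $2$ into the $dk$ prefactor (noting that the number of values of $s$ is $\leq dk/2+1 \leq dk/2$ for $dk \geq 2$). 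Finally, by the construction in \Cref{subsec:dfs}, the variables $\set{X_{j,i}}_i$ used by block $j$ are disjoint (as a subset of the doubly-indexed independent family) from those used by any other block, hence the goodness events for distinct blocks depend on disjoint collections of mutually independent Bernoullis and are therefore themselves mutually independent.

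There is no real obstacle here: the only minor point to watch is the condition $t_s \leq \mu_s/2$ required by \Cref{thm:chernoff}, and a small bookkeeping check on the constant in the exponent, neither of which is delicate. The substance of the paper will come in the subsequent arguments that use this concentration to control the evolution of the DFS.
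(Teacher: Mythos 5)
Your proof is correct and follows essentially the same route as the paper: apply the Chernoff bound of \Cref{thm:chernoff} to $Y_{j,s}$ for each threshold $s\in[dk/2,dk]$, then union-bound over $s$, with independence between blocks coming from the disjoint families $\{X_{j,i}\}_i$. The only nit is the claim $dk/2+1\leq dk/2$, which is of course false, but after absorbing constants this does not affect the stated bound in any meaningful way.
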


\begin{proof}
    Firstly, independence follows from the fact that each block samples a different sequence of independent Bernoulli variables, so it remains to prove the given bound for fixed $j$.
    Fix $s\in [dk/2, dk]$, and observe that $Y_{j,s} = X_{j,1} + X_{j,2} + \dotsb + X_{j,s}$ is a binomial random variable.
    Noting that $\EE[Y_{j,s}] = p_1 s = (1+2\eps)d^{-1}s$, we may apply Chernoff's inequality (\Cref{thm:chernoff}) to find that
    \begin{align*}
        \prob{Y_{j,s} < (1+\eps)d^{-1}s} 
        &\leq \prob{\abs{Y_{j,s} - \EE[Y_{j,s}]} > \eps d^{-1} s} \\
        &< 2 \exp\p[\Big]{\frac{-\eps^2 s}{3(1+2\eps)d}} \\
        &< 2 \exp\p[\Big]{\frac{-\eps^2 k}{12}}.
    \end{align*}
    The result then follows by a union bound.
\end{proof}

We now deduce from \Cref{lem:likely-good} that, with high probability, there are very few bad blocks.

\begin{lemma}
    \label{lem:few-bad}
    Let $\lambda = \eps^2/24$, and let $Z_s$ be the number of the first $s$ blocks which are bad.
    Then, with probability at least $1 - \exp(-\lambda k / 8)$, for all $s$ we have
    \begin{equation*}
        Z_s \leq s e^{-\lambda k / 2}.
    \end{equation*}
\end{lemma}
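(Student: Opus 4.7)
The plan is to treat the bad-block indicators as independent Bernoulli random variables (via \Cref{lem:likely-good}), bound the probability of $\set{Z_s > s e^{-\lambda k/2}}$ for each individual $s$ by a standard binomial tail inequality, and then take a union bound over all $s \geq 1$.

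First, by \Cref{lem:likely-good}, the events $\set{B_j \text{ is bad}}$ are independent and each has probability at most $q \defined dk \exp(-\eps^2 k /12) = dk e^{-2\lambda k}$; in particular, $Z_s$ is stochastically dominated by a binomial random variable with parameters $s$ and $q$. Since $Z_s$ is integer-valued, the event $\set{Z_s > s e^{-\lambda k/2}}$ forces $Z_s \geq m_s \defined \lfloor s e^{-\lambda k/2}\rfloor + 1 \geq s e^{-\lambda k/2}$, so the standard inequality $\binom{s}{m}q^m \leq (esq/m)^m$ applied with $m = m_s$ gives
\begin{equation*}
\prob{Z_s > s e^{-\lambda k/2}} \leq \beta^{m_s}, \qquad \text{where } \beta \defined eq e^{\lambda k/2} = e dk e^{-3\lambda k/2}.
\end{equation*}
The crucial point is that $\beta$ is doubly-exponentially small in $\lambda k$.

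Now I would union-bound over all $s \geq 1$, partitioning the positive integers according to the value $j = \lfloor s e^{-\lambda k/2}\rfloor \geq 0$. Each such class contains at most $2 e^{\lambda k/2}$ integers and, since $m_s = j+1$ on it, contributes at most $2 e^{\lambda k/2}\beta^{j+1}$ to the sum. Summing the resulting geometric series yields a total failure probability of order $e^{\lambda k/2}\beta/(1-\beta) = O(dk e^{-\lambda k})$ provided $\beta \leq 1/2$. The hypothesis $\eps \geq 300 d^{-1/2}$ (which via $\lambda = \eps^2/24$ makes $\lambda k$ sufficiently large relative to $\log(dk)$) then ensures that this is at most $\exp(-\lambda k / 8)$.

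The only mild obstacle is that the quantifier ``for all $s$'' ranges over every positive integer, since no a priori bound on $\card{V(G)}$ is imposed; a one-shot Doob-type maximal inequality is awkward here because the event $\set{Z_s > s e^{-\lambda k/2}}$ sits right at the threshold $N_s > 1$ of the natural exponential supermartingale. The infinite union bound bypasses this cleanly: $\beta$ is small enough to easily beat the volume factor $e^{\lambda k/2}$, and beyond that the proof is pure bookkeeping to recover the stated exponent $\lambda k / 8 = \eps^2 k / 192$.
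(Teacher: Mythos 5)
Your proof is correct, and it takes a genuinely different route from the paper's. The paper splits the positive integers at a cutoff $s_0 = \exp(3\lambda k/4)$: it shows $\prob{Z_{s_0}=0}\geq 1-\exp(-\lambda k/4)$ to handle all $s\leq s_0$ at once, and then applies the central Chernoff bound (\Cref{thm:chernoff}) followed by a union bound over $s\geq s_0$. You instead apply a single uniform bound valid for every $s$ — the elementary binomial tail estimate $\binom{s}{m}q^m\leq(esq/m)^m$ — and then bucket the positive integers by $j=\lfloor se^{-\lambda k/2}\rfloor$, which turns the union bound into a geometric series. Your choice of estimate is arguably the more natural one here: the deviation $se^{-\lambda k/2}$ is far into the upper tail (of order $e^{\lambda k/2}$ times the mean $\Expec Z_s\leq se^{-\lambda k}$), which places it outside the range $t\leq\mu/2$ for which the paper's \Cref{thm:chernoff} is stated, whereas the $(esq/m)^m$ bound requires no such restriction. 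The two proofs thus land on the same exponentially small failure probability, but yours avoids the explicit $s_0$ cutoff and the implicit appeal to a far-tail Chernoff variant; the paper's approach, in exchange, makes the doubly-exponential smallness of the contribution from large $s$ very visible.
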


\begin{proof}
    We first show that $Z_s$ is zero for all small $s$, and then use Chernoff's bound to deal with larger values of $s$.
    Note first that, for sufficiently large $k$,
    \begin{equation*}
        dk \exp(-\eps^2 k/12) = dk \exp(-2\lambda k) < \exp(-\lambda k).
    \end{equation*}
    Let $s_0 \defined \exp(3\lambda k / 4)$, and note that
    \begin{equation*}
        \prob{Z_{s_0} = 0} \geq (1 - \exp(-\lambda k))^{s_0} \geq 1 - s_0 \exp(-\lambda k) = 1 - \exp(-\lambda k / 4).
    \end{equation*}
    Thus with probability at least $1 - \exp(-\lambda k / 4)$, none of the first $s_0$ blocks is bad.
    We now consider $s\geq s_0$ and apply Chernoff's bound (\Cref{thm:chernoff}) to obtain
    \begin{align*}
        \prob{Z_s \geq s e^{-\lambda k / 2}} 
        &\leq \prob{\abs{Z_s - \EE Z_s} > s (e^{-\lambda k / 2} - e^{-\lambda k})} \\
        &< 2\exp\p[\bigg]{-\frac{s^2(e^{-\lambda k / 2} - e^{-\lambda k})^2}{3se^{-\lambda k / 2}}} \\
        &< 2 \exp(-s e^{-\lambda k/2} / 6).
    \end{align*}
    We now apply a union bound over all $s\geq s_0$, and find that our failure probability is at most
    \begin{align*}
        \sum_{s\geq s_0} 2\exp(-s e^{-\lambda k/2} / 6)
        &= \frac{2\exp(-s_0 e^{-\lambda k/2} / 6)}{1 - \exp(- e^{-\lambda k/2} / 6)} \\
        &< \frac{24 \exp(-e^{\lambda k / 4} / 6)}{e^{-\lambda k / 2}} \\
        &= 24 \exp\p[\bigg]{\frac{3\lambda k - e^{\lambda k / 4}}{6}},
    \end{align*}
    which tends to 0 as $k\to\infty$, and does so faster than $\exp(-\lambda k / 4)$, completing the proof of the lemma.
\end{proof}

The majority of the work in proving \Cref{thm:main-technical} will come in \Cref{lem:key}.
However, before we can state the lemma we need to introduce some more terminology.
At time $t$, two vertices $x$, $y$ in the connected component of $F(t)$ containing $A(t)$ are connected by a unique path $P_{x,y}(t)$ (as $F(t)$ is a forest); write $x\sim_t y$ if this path exists.
We will define the distance $\rho$ between $x$ and $y$ to be the length of this path:
\begin{equation*}
    \rho(x,y) = \min\set{\card{P_{x,y}(t)} \st t\geq 0 \; \text{ and } x\sim_t y}.
\end{equation*}
By convention, the minimum of the empty set to be infinite.
When we refer to the ``distance'' between two vertices, we mean distance in the sense of $\rho(x,y)$.
Note that, as $A(t)\union W(t)\union T(t)$ is a forest, as soon as $x$ and $y$ are connected by a path, this will be the unique path between then for the rest of time, and so as soon as there is a path from $x$ to $y$ at a time $t$, this is the shortest path between these points in $A(\tau)\union W(\tau) \union T(\tau)$ for all $\tau \geq t$.

Note that if $\rho(x,y) \geq s$ is finite and $G_p$ also contains an edge connecting $x$ and $y$, then $G_p$ has a cycle of length at least $s$.
Indeed, call an edge $xy$ of $G$ \emph{long} if there is a time $t$ at which $\rho(t;x,y)$ is both finite and at least $\alpha kd$ where $\alpha \defined \eps^2/100$.

If we can find many long edges of $G$ then we can apply sprinkling and with high probability one of these edges will be added to our percolated graph, giving us a long cycle.
We may now state our key lemma, which will allow us to find many long edges.
Recall that $t_i$ is the minimum time at which precisely $ik$ vertices have been processed (removed from $A$), the block $B_i$ is the set of vertices processed at times in the interval $[t_{i-1} + 1, t_i]$, and $Z_s$ is the number of the first $s$ blocks that are bad.

\begin{lemma}
    \label{lem:key}
    Assume that $Z_s \leq s e^{-\lambda k / 2}$ (i.e. the conclusion of \Cref{lem:few-bad}) holds deterministically for all $s$, that $G_p$ has fewer than $\eps k$ long edges, fix integer $\l \leq \card{V(G)} / k$, and let $\gamma = \eps(1 - d^{-1}(1+\eps)) / 2$ and $\alpha = \eps^2/100$.
    Let $v_j$ be the final vertex processed from block $B_j$ and let $a_j = \card{A(t_j)}$.
    Then the following hold for all $i$
    \begin{enumerate}[itemsep=-0.1em]
        \item \label{item:safe-count} $\abs{S(t_i)} \geq i/2k - 2Z_i$;
        \item \label{item:containment} $S(t_i) \sseq A(t_i)$ and, in particular, the DFS does not fail;
        \item \label{item:safe-far} If $u,w$ are safe vertices then $\rho(u,w) \geq \gamma k^2$;
        \item \label{item:distances} If $i < j < \l$ and all blocks $B_i,B_{i+1},\dotsc,B_j,\dotsc,B_\l$ are good, and $u\in B_i$, $w\in B_\l$, then the path in $W(t_\l)\union A(t_\l)$ from $u$ to $w$ passes within a distance $k+2$ of $v_j$;
        \item \label{item:safe-region} There are at most $2(\alpha d + 3) \gamma^{-1} k$ vertices in $A(t_i) \union W(t_i) \union T(t_i)$ within a distance $\alpha d k$ of a safe vertex $s \in S(t_i)$;
        \item \label{item:growing} If $B_i$ is good, then $a_i - a_{i-1} \geq \gamma k$.
    \end{enumerate}
\end{lemma}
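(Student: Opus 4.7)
The plan is to prove Lemma~\ref{lem:key} by simultaneous induction on $i$, with the base case $i = 0$ trivial since every set but $U$ is empty. For the inductive step I verify the items in the order (iv), (i), (ii), (vi), (iii), (v), which respects their mutual dependencies.

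The structural items (iv) and (i) are essentially deterministic. For (iv), the hypothesis that $B_i, B_{i+1}, \dotsc, B_\l$ are all good (together with the inductive form of (vi)) implies no trashing occurs in this range and the stack $A$ stays nonempty, so $u$, $v_j$, and $w$ lie in a common DFS tree. At time $t_j$ the stack $A(t_j)$ consists of ancestors of $v_j$; any later vertex $w \in B_\l$ attaches as a descendant of a surviving stack element, so the unique $F$-path from $u$ to $w$ passes within $k+2$ edges of $v_j$. For (i), safe additions occur only at indices $2jk$ and require the disjoint $2k$-block window ending at $2jk$ to be all good; since the windows are disjoint, at most $Z_i$ additions are spoiled, and each bad block pops at most one safe, giving $|S(t_i)| \geq \lfloor i/(2k)\rfloor - 2Z_i$. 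Item (ii) follows by tracking the algorithm's bookkeeping: safes leave $S$ only via explicit bad-block pops, which by construction place the popped safe on top of $S$ and in $A$; the non-failure of the algorithm follows from (i) combined with the inductive form of (iii), which ensures that during the interval between consecutive bad blocks the DFS cannot pop through to a safe vertex, as this would require far more than a single block's worth of pops.

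The core of the argument is item (vi). Assume $B_i$ is good. During the block, $k$ vertices are popped and $e_i$ are pushed, giving $a_i - a_{i-1} = e_i - k$; it suffices to show $e_i \geq (1 + \gamma)k$. The total queries in the block satisfy
\begin{equation*}
    Q_i = e_i + \#\{\text{negative queries from } B_i\} \geq |N_G(B_i) \cap U(t_i)|,
\end{equation*}
since every neighbor of $B_i$ still unprocessed at $t_i$ must have been queried negatively from some vertex of $B_i$. By vertex expansion $|N_G(B_i)| \geq dk$, so
\begin{equation*}
    Q_i \geq dk - \bigl|N_G(B_i) \cap (A(t_i) \cup W(t_i) \cup T(t_i))\bigr|.
\end{equation*}
I split the neighbors in $A(t_i) \cup W(t_i) \cup T(t_i)$ into those at $\rho$-distance at least $\alpha kd$ from $B_i$, which give rise to long edges of $G$ and number fewer than $\eps k$ by hypothesis, and those within distance $\alpha kd$. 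For the latter, a safe vertex $s$ lies within $O(k)$ of the top of $A(t_i)$ (by (i) and (ii)), so the $F$-ball of radius $\alpha kd$ about $B_i$ is contained in the ball of radius $\alpha kd + O(k)$ about $s$, whose size is at most $2(\alpha d + 3)\gamma^{-1}k$ by the inductive form of (v). With $\alpha = \eps^2/100$ and $\gamma = \Theta(\eps)$, this bound is $O(\eps d)\,k$, dwarfed by the main term $dk$, and so $Q_i \geq (1 - O(\eps))dk$. Since this places $Q_i$ in the range $[dk/2, dk]$ (after absorbing constants into the lower bound on $\eps$ from the theorem hypothesis), goodness of $B_i$ yields $e_i = Y_{i,Q_i} \geq (1+\eps)(1 - O(\eps))k \geq (1 + \gamma)k$, establishing (vi).

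Finally, (iii) follows from (vi): any two consecutive safes are added $2k$ good blocks apart, during which $A$ grows by at least $2k \cdot \gamma k = 2\gamma k^2$, and since both safes lie on the path $A \subseteq F$, their $\rho$-distance is at least $\gamma k^2$. Claim (v) then follows by observing that vertices within $F$-distance $\alpha kd$ of a safe $s$ lie in blocks whose depths are within $\alpha kd$ of $s$'s depth; by (iii) and (vi) at most $\alpha d / \gamma + O(1)$ blocks on either side qualify, each contributing $k$ vertices. The principal obstacle lies in item (vi), which interleaves the long-edge hypothesis, vertex expansion, and the inductive forms of (iii) and (v); the remaining items are largely combinatorial bookkeeping once (vi) is secured.
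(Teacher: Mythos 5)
Your overall strategy — simultaneous induction on $i$, with the DFS bookkeeping items handled deterministically and item (vi) carrying the main analytic content via the expansion of $G$, the long-edge hypothesis, and the goodness of the block — is the same as the paper's. Items (i)--(iv) are handled essentially identically. However, your argument for item (vi) contains a genuine error.

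You claim ``a safe vertex $s$ lies within $O(k)$ of the top of $A(t_i)$,'' and use this to place the $\rho$-ball of radius $\alpha kd$ around $B_i$ inside a slightly larger ball around $s$, so that item (v) applies. But safe vertices are created only every $2k$ blocks and sit roughly $k$ blocks behind the head of the path; combined with item (vi) this places the nearest safe vertex at $\rho$-distance $\Theta(\gamma k^2)$, not $O(k)$, from $B_i$. Since the paper needs $\gamma k^2 > \alpha kd$ for item (v) to make sense, the nearest safe vertex may well lie \emph{outside} the radius-$\alpha kd$ ball around $B_i$, so the containment ``ball around $B_i$ $\subseteq$ ball around $s$'' fails and item (v) cannot be invoked this way. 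Relatedly, your treatment omits the vertices in $T(t_i)$ that may lie within $\rho$-distance $\alpha kd$ of $B_i$: these do not pass through the \emph{current} safe vertex $s \in S(t_i)$, but through a \emph{former} safe vertex $q$ that was consumed at an earlier bad block and now lies in $T$. The paper handles this by splitting the near-$B_i$ vertices into (a) those processed since the last bad block, counted directly from the growth rate (items (iv), (vi)), and (b) trashed vertices, which are bounded by applying item (v) to $q$ \emph{at the earlier time when $q$ was still in $S$}. Without this split, the bound $Q_i \geq (1-O(\eps))dk$ you use for the Chernoff step is not established. The remaining items, and the structure of the rest of your argument, match the paper.
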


\begin{proof}
    We use induction on $i$, and treat $i = 0$ as a (vacuous) base case.
    We induct on all parts of the lemma simultaneously, and thus prove that they hold for some fixed $i$ in turn, assuming that all of \cref{item:safe-count,item:containment,item:safe-far,item:distances,item:safe-region,item:growing} hold for smaller values of $i$.
    
    \vspace{0.5em}
    \textbf{\Cref{item:safe-count}.}
    We consider when vertices are added to or removed from $S(t_i)$.
    If the blocks $B_{2jk+1}, B_{2jk+2}, \dotsc, B_{2(j+1)k}$ are all good, then a safe vertex is created, and if any of these are bad, then a safe vertex is not created, and one safe vertex is removed from $S$ for every bad block.
    In particular, each bad block can use up one safe vertex and prevent at most one safe vertex from being created, from which the claimed inequality follows immediately.

    Note that, from \Cref{lem:few-bad}, $i/2k - 2Z_i \geq i(1/2k - e^{-\eps^2k/48}) \geq 0$, and so there is always a safe vertex available whenever one is required.

    \vspace{0.5em}
    \textbf{\Cref{item:containment}.}
    Due to the induction hypothesis on \cref{item:growing}, and the observation that a safe vertex $s$ is only created after $k$ further good blocks have been processed, we see that $s$ is at least $\gamma k^2$ vertices behind the head of $A$ when it is first deemed safe.

    Each block can remove at most $k < \gamma k^2$ vertices from the path $A$ (before adding more again), and so $s$ can never be removed from the path unless it is moved to $T$ due to a bad block being discovered. 
    Noting that, due to \cref{item:safe-far}, other safe vertices are still at distance at least $\gamma k^2$ from the head of the path when a bad block is discovered and a safe vertex is used, \cref{item:containment} follows.

    \vspace{0.5em}
    \textbf{\Cref{item:safe-far}.}
    It follows from \cref{item:growing} that each block extends the length of the path by at least $\gamma k$ vertices, and this is done more than $k$ times (in fact, $2k$ times) between safe vertices being created.
    As discussed in the proof of \cref{item:containment} above, the path must grow by at least $\gamma k^2$ vertices between successive creations of safe vertices, whence this item follows.

    \vspace{0.5em}
    \textbf{\Cref{item:distances}.}
    After $B_j$ is processed, the vertex $v_j$ is at distance 1 from the top vertex of $A(t_j)$; we claim that the path from $u\in B_i$ to $w \in B_\l$ must pass through one of the last $k$ vertices of $A(t_j)$.
    Indeed, notice that if the blocks $B_h$ and $B_{h+1}$ are good then the vertex $x$ at distance $(k+1)$ from the top of $A(t_h)$ is also in $A(t_{h+1})$ as only $k$ vertices are removed from the path while processing $B_{h+1}$.
    Then, due to \cref{item:growing}, $x$ continues to be at distance strictly greater than $k+1$ from the top of the path while good blocks are processed, and so is not removed from the path during this time.
    Therefore, $x\in A(t_\l)$.

    Let $y$ be the topmost vertex of $A(t_h)$ which is also in $A(t_\l)$.
    Then $y$ is at distance at most $k+2$ from $v_j$ and is on the path from $u$ to $w$, as required.

    \vspace{0.5em}
    \textbf{\Cref{item:safe-region}.}
    We know from \Cref{obs:dfs} that any path from $T(t_i)$ to $B_i$ passes through a safe vertex $q\in T(t_i)$ (and note that $q\notin S(t_i)$).
    Due to \cref{item:safe-far} and the fact that $\gamma k^2 > \alpha d k$, there are no other safe vertices within distance $\alpha d k$ of $s$ -- in particular, $\rho(s,q) \geq \alpha dk$ -- and thus no vertices of $T(t_i)$ within this distance.
    It suffices to prove that there are at most $(2\alpha d + 4) \gamma^{-1}k$ vertices of $A(t_i) \union W(t_i)$ within distance $\alpha d k$ of $s$.

    There is an integer $j < i$ such that $s\in B_j$, and the definition of safe vertices implies that some descendant of $s$ in $W(t_i)$ is in $B_{(2\l + 1)k}$ for some $\l$.
    Note that \cref{item:growing} implies that $\abs{(2\l + 1)k - j} < \gamma^{-1}$.
    In particular, all vertices of $B_{2\l k}$ and $B_{2(\l + 1)k}$ are at distance at least $\gamma k^2 - k > \alpha kd$ from $s$, and so it suffices to consider only $B_{2\l k+1},B_{2\l k+2},\dotsc,B_{2(\l+1) k-1}$, all of which are good due to the fact that $s$ is safe.
    
    Consider a vertex $u\in B_{j + h}$ for $h$ satisfying $2\l k+1 \leq j+h \leq 2(\l+1) k-1$, and assume that $\rho(u,s) \leq \alpha kd$.
    We know from \cref{item:distances} and \cref{item:growing} that 
    \begin{equation*}
        \alpha kd \geq \rho(u,s) \geq \rho(v_{j+h},v_j) - 2k \geq (\abs{h}\gamma - 2) k.
    \end{equation*}
    This implies that $\abs{h} \leq (\alpha d + 2) \gamma^{-1}$, and the desired result follows immediately.

    \vspace{0.5em}
    \textbf{\Cref{item:growing}.}
    Due to the expansion of $G$, we know that there is a set $X\sseq V \setminus B_i$ of vertices such that $\card{X} = dk$ and every vertex in $X$ is adjacent in $G$ to some vertex of $B_i$.
    The key observation is that, due to the inductive assumptions that the numbers $a_i$ are growing linearly (with the exception of when bad blocks are encountered), only a bounded number of the vertices of $X$ can lie in the set $W(t_i)\union A(t_i)\union T(t_i)$.

    We first show that at most $3(\alpha d + 3)\gamma^{-1} k$ vertices are within distance $\alpha kd$ of some vertex of $B_i$.
    Indeed, we first consider vertices in $B_i,B_{i-1},\dotsc,B_{i-j}$ under the assumption that these blocks are all good, and then add in the contribution from possible bad blocks.

    If these blocks are all good, then we know that due to a similar argument to that used in proving \cref{item:safe-region} that there are at most $(\alpha d + 3) \gamma^{-1} k$ vertices within distance $\alpha kd$ of $B_i$ amongst those vertices processed since the last bad block.

    Noting that, due to \cref{item:safe-far}, there is at most one safe vertex $s$ within distance $\alpha kd$ of $B_i$, and there were at most $2(\alpha d + 3)\gamma^{-1}k$ vertices within distance $\alpha kd$ of $s$ at some time $t$ whereat $s\in S(t)$, we know that in total there are at most $3(\alpha d + 3) \gamma^{-1}k$ vertices of $A(t_i) \union W(t_i) \union T(t_i)$ at distance at most $\alpha kd$ of $B_i$.
    Therefore, we must have
    \begin{equation*}
        \card{X \inter U(\l)} \geq \p[\big]{d - 3(\alpha d + 3) \gamma^{-1} - \eps}k.
    \end{equation*}
    As all vertices in $B_i$ are by definition processed at times before $t_i$, we know that $B_i \sseq W(i)$, and thus every edge from $B_\l$ to $X\inter U(\l)$ has been queried by the DFS algorithm at  a time between $t_{\l - 1}$ and $t_\l$, and these queries have returned negative.
    Thus, between times $t_{\l - 1}$ and $t_\l$, there were at least $(d - 3(\alpha d + 3)\gamma^{-1} - \eps)k \in [dk/2, dk]$ negative responses to queries.
    Due to \Cref{def:good}, there must therefore have also been at least $(1+\eps)(1 - 3(\alpha + 3d^{-1})\gamma^{-1} - \eps d^{-1})k$ positive responses to queries.

    Noting that every positive response leads to the path $A$ growing by one vertex, we see that
    \begin{equation}
        \label{eq:A-diff}
        \card{A(t_i)} - \card{A(t_{i - 1})} \geq (1+\eps)(1 - 3(\alpha + 3d^{-1})\gamma^{-1} - \eps d^{-1})k - k.
    \end{equation}
    It thus suffices to prove that the right-hand side of \eqref{eq:A-diff} is at least $\gamma k$.
    This sufficient condition can be rearranged to the quadratic inequality
    \begin{equation}
        \label{eq:quadratic}
        \gamma^2 - \eps(1 - d^{-1}(1+\eps)) \gamma + 3(\alpha + 3d^{-1})(1 + \eps) \geq 0,
    \end{equation}
    which has roots at
    \begin{equation*}
        \frac{\eps(1 - d^{-1}(1+\eps)) \pm \sqrt{\eps^2(1 - d^{-1}(1+\eps))^2 - 12(\alpha + 3d^{-1})(1 + \eps)}}{2}.
    \end{equation*}
    As $\gamma = \eps(1 - d^{-1}(1+\eps))/2$, $\alpha = \eps^2 / 100$, and $d^{-1} \leq \eps^2 / 300$, we find that the roots of \eqref{eq:quadratic} are real, and thus the right-hand side of \eqref{eq:A-diff} is at least $\gamma k$, as required.
    This completes the proof of \Cref{lem:key}.
\end{proof}

With \Cref{lem:key} in hand, the proof of \Cref{thm:main-technical} is simple: the graph $G$ is finite, and when the DFS algorithm ends, $A$ is empty.
It is immediate, therefore, that $A$ cannot grow indefinitely, but assertion \ref{item:safe-count} of \Cref{lem:key} together with \Cref{lem:few-bad} show that 
\begin{equation*}
    \abs{A(t_i)} > \abs{S(t_i)} \geq i\p[\Big]{\frac{1}{2k} - 2e^{-\lambda k / 2}} > \frac{i}{4k}
\end{equation*}
for sufficiently large $k$, and so the set of safe vertices (and hence also the path) grow without bounds, which is impossible.
Thus the assumption of \Cref{lem:key} -- that there are at most $\eps k$ long edges -- must at some point fail, and so, with probability at least $1 - \exp(-\eps^2 k / 200)$ (coming from \Cref{lem:few-bad}) there are at least $\eps k$ long edges in $G$.

Applying sprinkling with probability $p_2 \geq \eps d^{-1}$, at least one of these $\eps k$ long edges is present with probability at least
\begin{equation*}
    1 - (1 - \eps d^{-1})^{\eps k} \geq 1 - \exp(-\eps^2 d^{-1} k).
\end{equation*}
Thus we find a cycle of length at least $\alpha kd = \eps^2kd / 100$ with probability at least
\begin{equation*}
    1 - \exp(-\eps^2 k / 200) - \exp(-\eps^2 k / d) \geq 1 - 2 \exp(-\eps^2 k / 200d),
\end{equation*}
as required.

As we saw at the start of \Cref{sec:proof}, with \Cref{thm:main-technical} proved, \Cref{thm:main} follows as well.
\qed


\section{Concluding remarks}
\label{sec:conclusion}

We have demonstrated the existence of long cycles in percolated expanders with probability just above the critical threshold for having a large component.
A question that remains is how small $\eps > 0$ can be while percolation with probability $p=(1+\eps)/d$ still results in long paths or cycles.
Indeed, as stated in the introduction, \Cref{thm:paths} of \cite{CDEK25} gives that, with high probability, $G_p$ has a path of length $\Omega(dk)$ when $\eps> Cd^{-1}$ for a sufficiently large constant $C$.

\begin{question}
    \label{question:eps}
    Fix $d > 1$, let $k\in\NN$, and let the graph $G$ be an \expander.
    What (for fixed $d$ and $k\to\infty$) is the minimal order of magnitude of $\eps_0 = \eps_0(d) > 0$ such that, for all $\eps > \eps_0$ and $p = (1+\eps) / d$, the percolated graph $G_p$ with high probability has a path/cycle of length $\Omega_\eps(dk)$?
\end{question}

In particular, if one can take $\eps_0 \leq 1/(d-1)$ in \Cref{question:eps}, then (assuming that the dependence of the probability with which a path/cycle can be found is not too bad) this would imply the pleasant result that, if every edge of $G$ is coloured independently and uniformly at random from a palette of $d-1$ colours, then with high probability, for any colour, the subgraph consisting of edges given that colour would contain a long path/cycle.

In a different direction, much remains to be understood about what more complex structures occur with high probability in $G_p$.
Note that, due to the fact that $\eps$ is bounded from below by a function of $d$ while $k$ (and thus also $\card{G}$) is sent to infinity, it is not unreasonable to expect more complex structure to emerge in $G_p$.
In particular, we make the following conjecture, which would be a significant strengthening of \Cref{thm:main}.

\begin{conjecture}
    \label{conj:subdivide}
    Fix $d\in\NN$ and let $H$ be a graph of maximum degree 3.
    Let $k\in\NN$, and let $G$ be an \expander.
    Then for $\eps \geq \eps_0(d) = d^{-O(1)}$ and $p = (1 + \eps) / d$, the percolated graph $G_p$ contains a subdivision of $H$ with high probability.
\end{conjecture}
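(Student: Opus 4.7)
The plan is to build a subdivision of $H$ edge-by-edge via a sprinkling argument. Since $H$ is fixed, both $\card{V(H)}$ and $m \defined \card{E(H)}$ are constants depending only on $H$, so we may split the percolation probability as $p = 1 - \prod_{i=0}^{m}(1-q_i)$ with $q_0 = (1+\eps')/d$ for $\eps' = \eps/2$ and each subsequent $q_i = \Theta(\eps/d)$. Because $m$ is a constant, each $q_i$ remains of order $d^{-O(1)}$, preserving the $\eps_0(d) = d^{-O(1)}$ requirement. First, I would apply \Cref{thm:main} to $G_{q_0}$ to find a cycle $C$ of length $L = \Omega_\eps(kd)$ with probability $1 - \exp(-\Omega_\eps(k/d))$. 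Pick $\card{V(H)}$ branch vertices $b_1, \ldots, b_{\card{V(H)}}$ on $C$, evenly spaced so that consecutive ones are at distance $\Omega_\eps(kd)$ along $C$. The arcs between consecutive branch vertices realise a Hamiltonian cycle through the $b_i$'s; after labelling the branch vertices so that as many edges of $H$ as possible are captured by this cycle, there remain $O_H(1)$ further edges of $H$, each requiring an internally disjoint path between a prescribed pair of branch vertices.

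For each such remaining edge, I would use a fresh sprinkling round $q_i$ ($i \geq 1$) to find a path between the specified endpoints in the residual graph (with all previously-used vertices removed). The technical heart of the argument would be a \emph{connection lemma}: in a graph $G'$ that still satisfies the hypothesis of \Cref{thm:main} with $d$ replaced by $d - O_H(1)$ -- which holds after removing the $O_H(kd)$ already-used vertices, since $\card{V(H)}$ and $m$ are constants -- and for any two prescribed vertices $u, v$, the percolated graph $G'_q$ with $q = \Theta(\eps/d)$ contains a path from $u$ to $v$ with high probability. A natural approach is to adapt the modified DFS of \Cref{subsec:dfs} to grow ``branches'' from both $u$ and $v$ simultaneously, using the vertex expansion to force the two branches to meet within $O_\eps(kd)$ steps; this is reminiscent of the P\'osa rotation technique for finding Hamilton cycles in expanders.

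The main obstacle is precisely this connection step: \Cref{thm:main} produces an unspecified cycle rather than a path between prescribed endpoints, and adapting the block-based DFS analysis of \Cref{lem:key} to steer the search towards a target vertex is delicate in the sparse regime $p = (1 + \eps)/d$, where individual edges are rare. One would need to verify that the DFS-produced branches not only have the right length (which is essentially what \Cref{thm:main-technical} already delivers) but also that their sets of reachable endpoints are large enough that two such branches must overlap, or be joinable by an edge of $G_q$, with high probability. A subsidiary obstacle -- tracking how the effective expansion parameter degrades as interiors of previously constructed paths are removed -- should be manageable since the total removed size is $O_H(kd)$ while the expansion parameter is $d$, costing at most $O_H(1)$ in the effective expansion, but the precise accounting to keep $\eps_0(d)$ polynomial in $d^{-1}$ throughout the iteration will require care.
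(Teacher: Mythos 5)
This statement is \Cref{conj:subdivide}, which the paper explicitly poses as an open conjecture; there is no proof in the paper, so there is nothing to compare your proposal against, and what you have written must be judged as a candidate strategy on its own merits.

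Your high-level decomposition (find a long cycle via \Cref{thm:main}, choose branch vertices on it, then use fresh rounds of sprinkling to realise the remaining edges of $H$ as internally disjoint paths) is a natural plan and very likely the one the author had in mind when formulating the conjecture. The bookkeeping you sketch is broadly sound: since $H$ is fixed, the number of sprinkling rounds is $O_H(1)$ and each round can be given probability $\Theta(\eps/d)$ while still satisfying $\eps_0(d) = d^{-O(1)}$, and deleting the $O_H(kd)$ already-used vertices only degrades the expansion parameter from $d$ to $d-O_H(1)$, which is harmless for $d$ large. You have also correctly flagged the connection lemma as the crux of the matter. However, the gap there is genuinely substantive, not a matter of adapting \Cref{lem:key} with care: the DFS analysis in \Cref{sec:proof} exerts no control whatsoever over which vertices end up on the long path or cycle, and in the sparse regime $p = (1+\eps)/d$ there is no existing machinery for connecting two \emph{prescribed} vertices of a vertex expander with high probability. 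The analogue in $G(n,p)$ relies on uniqueness of the giant component plus a two-round exposure sweeping argument; neither is available here, and the ``P\'osa rotation'' you invoke requires quite a bit more than what the block-based DFS delivers. You should treat this as an unresolved lemma requiring a new idea, not a routine adaptation.

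There is also a secondary issue you do not address. The branch vertices $b_i$ must admit enough incident edges in $G_p$ to reach degree $\deg_H(b_i)$, i.e.\ for a degree-$3$ vertex of $H$ you need a third edge at $b_i$ beyond the two cycle edges. This is salvageable: in a $(=\hspace{-0.1em}k,d)$-vertex expander at most $k-1$ vertices can have degree less than $d$ (otherwise a set of $k$ low-degree vertices would violate expansion), so almost all of the $\Omega_\eps(kd)$ cycle vertices have $G$-degree at least $d$, and one can restrict to those. But a single such vertex gains a third sprinkled edge only with probability $\Theta(\eps)$, not with high probability, so the branch vertices cannot simply be ``evenly spaced'' as you propose; they must be chosen \emph{after} the relevant sprinkling round from among the (linearly many) candidates that happen to acquire extra edges, and this interacts nontrivially with the spacing requirement and with the disjointness bookkeeping. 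So the degree issue is not fatal, but it does require a different selection procedure than the one you wrote, and the connection lemma remains a genuine open gap.
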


It is plausible that the maximum degree of $H$ in \Cref{conj:subdivide} could even be taken as large as $d$.


\section{Acknowledgements}
\label{sec:acknowledgements}

The author would like to thank B\'{e}la Bollob\'{a}s for his comments on this manuscript which greatly improved the presentation.
The author is funded by the Internal Graduate Studentship of Trinity College, Cambridge.


\bibliographystyle{abbrvnat}  
\renewcommand{\bibname}{Bibliography}
\bibliography{main}


\end{document}